\newcommand{\beq}{\begin{equation}}
\newcommand{\eeq}{\end{equation}}
\newcommand{\beann}{\begin{eqnarray*}}
\newcommand{\eeann}{\end{eqnarray*}}
\newtheorem{lemma}{Lemma}
\theoremstyle{definition}
\newtheorem{definition}{Definition}
\definecolor{ao(english)}{rgb}{0.0, 0.5, 0.0}
\title{Many-Body Fermions and Riemann Hypothesis}
\author{Xindong Wang and Alex Shulman}
\begin{document}
\clearpage\maketitle
\thispagestyle{empty}

\begin{affiliations}
\item  Sophyics Technology, LLC
\end{affiliations}

\date{\today}

\begin{abstract}
We study the algebraic structure of the eigenvalues of a Hamiltonian that corresponds to a many-body fermionic system. As the Hamiltonian is quadratic in fermion creation and/or annihilation operators, the system is exactly integrable and the complete single fermion excitation energy spectrum is constructed using the non-interacting fermions that are eigenstates of the quadratic matrix related to the system Hamiltonian. Connection to the Riemann Hypothesis is discussed.
\end{abstract}

\clearpage
\setcounter{page}{1}

Riemann Hypothesis has long been conjectured to be related to the eigenvalues of a Hamiltonian\cite{BerryKeating} since Hilbert in early twentieth century. In this paper, we show that the eigenvalues of an anti-symmetric real matrix that arises from the off-diagonal paring matrix elements of a many-body fermionic Hamiltonian seems to provide the necessary link between the Berry-Keating\cite{BerryKeating,BBM} Conjecture and the final proof of Riemann Hypothesis. This work points to the importance of Riemann Hypothesis to the understanding of intricate quantum entanglement of a many body system.

\section{Off-diagonal ordering in many-body fermionic system}
We study the following 1-dimensional spin-half many-body fermionic Hamiltonian
\begin{equation} \label{water effective hamiltonian}
\begin{aligned}
    \hat{H} =& \sum_{i,\sigma} \frac{\sigma}{2}\{ \hat{p}^\dagger_{i\sigma} \hat{p}_{i\sigma}  -  \hat{h}^\dagger_{i\sigma} \hat{h}_{i\sigma} \}  -{\big \{}\sum_{i>i',\sigma} t({ i-i'}) \hat{p}^\dagger_{i\sigma} \hat{h}^\dagger_{i'-\sigma} + h.c. {\big \}}\\=& \begin{bmatrix}
    \hat{\bf \xi}^\dagger_\uparrow & \hat{\bf \xi}_\downarrow^\dagger
    \end{bmatrix} \begin{bmatrix}T_\uparrow & 0\\  0 &T_\downarrow\end{bmatrix} \begin{bmatrix}\hat{\bf \xi}_\uparrow \\ \hat{\bf \xi}_\downarrow
    \end{bmatrix}  \   \ i\in\{1,2,...,N\}, \   \ \sigma\in\{\uparrow, \downarrow\}, \    \ N\geq 2
\end{aligned}
\end{equation}
and we further assume $t(i+N) = t(i)$, i.e., the system is a closed loop. 
\begin{equation}
    \hat{\bf \xi}^\dagger_\sigma = \begin{bmatrix} 
    \hat{p}^\dagger_{1\sigma} &  \hat{p}^\dagger_{2\sigma} & ... & \hat{p}^\dagger_{N\sigma}  & \hat{h}_{1-\sigma}& \hat{h}_{2-\sigma} ... &\hat{h}_{N-\sigma}
    \end{bmatrix}  
\end{equation}
\begin{equation} \label{T matrix}
    T_\sigma= \sigma \begin{bmatrix}\frac{1}{2}I_{N} & \sigma \Delta\\  \sigma \Delta^\dagger &-\frac{1}{2}I_{N}\end{bmatrix}
\end{equation}
and
\begin{equation} \label{Delta}
    \Delta=\frac{1}{2}\begin{bmatrix}
    0 & t(1) & t(2) & ... & t(N-1) \\
    -t(1) & 0 & t(1) & ... & t(N-2) \\
    ... \\
    -t(N-1) & -t(N-2) & -t(N-3) & ... &0
    \end{bmatrix}
\end{equation}
is an anti-symmetric matrix, due to the anti-commutative relation of the fermionic operators.

The total charge operator for this system is defined as
\begin{equation} \label{charge operator}
\hat{N}_c = \sum_{i\sigma} (\hat{p}^\dagger_{i\sigma} \hat{p}_{i\sigma} - \hat{h}^\dagger_{i\sigma} \hat{h}_{i\sigma})
\end{equation}
and the total spin operator
\begin{equation}
    \hat{\Sigma}_3 = \sum_{i\sigma} \sigma(\hat{p}^\dagger_{i\sigma} \hat{p}_{i\sigma} + \hat{h}^\dagger_{i\sigma} \hat{h}_{i\sigma}) = \sigma \hat{N}_{c\sigma}
\end{equation}
where
\begin{equation}
\hat{N}_c\sigma = \sum_{i} (\hat{p}^\dagger_{i\sigma} \hat{p}_{i\sigma} + \hat{h}^\dagger_{i-\sigma} \hat{h}_{i-\sigma})
\end{equation}
And one can show that the total charge operator commutes with the Hamiltonian \eqref{water effective hamiltonian} 
\begin{equation}
    [\hat{H}, \hat{N}_c] = 0 ,\  \ [\hat{H}, \hat{\Sigma}_3] = 0
\end{equation}
We will focus on the case where the total charge as well as the total spin of the system is zero, i.e., zero chemical potential and zero external magnetic field.

Since the two spin channels are completely decoupled and degenerate, we will only need to discuss the energy spectrum of $T_\uparrow$ below.

For a Hamiltonian of quadratic form, it can be exactly diagonalized in the subspace of zero charge as
\begin{equation}
    \hat{H} = \sum_{(n\sigma)\in\{(n\sigma)|\varepsilon_n \le 0\}}  \varepsilon_n \{|Vac_h\rangle\langle Vac_h|+|Vac_p\rangle\langle Vac_p|\} + \sum_{n\sigma\omega\in\{h,p\}} |\varepsilon_n| \hat{\gamma}_{n\sigma\omega}^\dagger\hat{\gamma}_{n\sigma\omega} 
\end{equation}
where 
\begin{equation}
\begin{aligned}
    &\hat{\gamma}_{n\sigma p} = \sum_{i} u^\sigma_{n,i} \hat{p}_{i\sigma} + \sum_j v^\sigma_{n, j}\hat{h}_{j-\sigma}^\dagger \\
    &\hat{\gamma}_{n\sigma h} = \sum_{i} u^\sigma_{n,i} \hat{h}_{i-\sigma} + \sum_j v^\sigma_{n, j}\hat{p}_{j\sigma}^\dagger
\end{aligned}
\end{equation}
and the coefficients and $\varepsilon_n$ are eigenvectors and eigenvalues of the following Hermitian matrix T defined in Eq.\eqref{T matrix}
\begin{equation} \label{single fermion excitation}
   T \begin{bmatrix}
    {\bf u}_n \\ {\bf v}_n 
    \end{bmatrix} = \varepsilon_n \begin{bmatrix}
    {\bf u}_n \\ {\bf v}_n 
    \end{bmatrix}
\end{equation}
Note that we have explicitly retained the two time reversal symmetry related degenerate vacuum states $|Vac_p\rangle$ and $|Vac_h\rangle$, representing the two degenerate ground states of filled Fermi sea of p-fermions or h-fermions, and $\hat{\gamma}_p, \hat{\gamma}^\dagger_p$ and $\hat{\gamma}_h, \hat{\gamma}_h^\dagger$ represent the Majorana fermions corresponding to their respective vacuum states.

Note that the two time reversal symmetry related sets of solutions can be considered decoupled to each other at the thermodynamic limit, since the only common eigenstate for each set is the absolute empty vacuum where all the filled Fermi sea fermions in the 2 vacuum states are all excited. Thus in the thermodynamic limit, we can consider the two sets of solutions two time-reversal symmetry related universe.

Expand explicitly the Eq.\eqref{single fermion excitation}, we have
\begin{equation}
\begin{aligned}
    &(1/2-\varepsilon_n) I_N {\bf u}_n + \Delta {\bf v}_n = 0 \\
    &\Delta^\dagger {\bf u}_n + (-1/2 -\varepsilon_n) I_N {\bf v}_n = 0
\end{aligned}
\end{equation}
which leads to
\begin{equation}
    \{(\varepsilon_n - 1/2)(\varepsilon_n + 1/2) I_N - \Delta^\dagger \Delta\} {\bf v} = 0
\end{equation}
That is $\varepsilon_n$ are roots of the following polynomial
\begin{equation}
    \mathcal{P}(z)=det\begin{bmatrix}
    (z^2-(1/2)^2) I_N - \Delta^\dagger \Delta
    \end{bmatrix}
\end{equation}

The eigenvalues $\varepsilon_n$ are thus
\begin{equation}
    \varepsilon_n = \sqrt{(1/2)^2+t_n^2}
\end{equation}
where $t_n^2$ are singular values of $\Delta^\dagger\Delta$

\section{Algebraic structure of an anti-symmetric matrix}
To make the connection to Riemann Hypothesis, we first develop the mathematical theory for the algebraic structure of eigenvalues of anti-symmetric matrices.
\begin{lemma} \label{lemma}
For any given anti-symmetric matrix $\Delta$, it can be unitarily diagonalized. And when $\Delta$ is anti-symmetric real, all its eigenvalues are imaginary.
\end{lemma}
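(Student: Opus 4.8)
The plan is to reduce the lemma to the spectral theorem for Hermitian matrices. The key observation is that multiplying a real anti-symmetric matrix by $i$ makes it Hermitian: if $\Delta$ is real with $\Delta^T=-\Delta$, then $A:=i\Delta$ obeys $A^\dagger = -i\,\Delta^\dagger = -i\,\overline{\Delta}^{\,T} = -i\,\Delta^T = i\Delta = A$. Equivalently $\Delta$ is skew-Hermitian, and in particular normal, since $\Delta\Delta^\dagger = -\Delta^2 = \Delta^\dagger\Delta$.

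From here the argument is short. Applying the spectral theorem to $A$ gives a unitary $U$ and a real diagonal $D=\mathrm{diag}(d_1,\dots,d_N)$ with $A=UDU^\dagger$; hence $\Delta=-iA=U(-iD)U^\dagger$ is unitarily diagonalized, and its eigenvalues $-id_1,\dots,-id_N$ are purely imaginary (possibly zero — indeed $\det\Delta=(-1)^N\det\Delta$ forces a zero eigenvalue when $N$ is odd). As an independent check I would also record the direct computation: for an eigenpair $(\lambda,v)$ with $v\neq0$ one has $\lambda\|v\|^2=v^\dagger\Delta v$ while $\overline{v^\dagger\Delta v}=v^\dagger\Delta^\dagger v=-v^\dagger\Delta v$, so $v^\dagger\Delta v$ is purely imaginary and therefore so is $\lambda$. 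A useful byproduct is that $\Delta^\dagger\Delta=-\Delta^2$ has eigenvalues $|\lambda|^2\ge0$, which are exactly the nonnegative $t_n^2$ entering $\varepsilon_n=\sqrt{(1/2)^2+t_n^2}$; and since the characteristic polynomial of the real matrix $\Delta$ has real coefficients, the nonzero eigenvalues occur in conjugate pairs $\pm it_n$.

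The only genuinely delicate point is the scope of the first sentence, which as stated does not specify that $\Delta$ is real. Reality is used essentially in the reduction above: a generic complex anti-symmetric matrix is not normal, so it admits no unitary diagonalization $U^\dagger\Delta U=$ diagonal, and it cannot be reduced to a diagonal matrix by a unitary congruence either, since anti-symmetry is congruence-invariant and at best yields a block form. I would therefore present the lemma for the real $\Delta$ of Eq.~\eqref{Delta} — which is all that Section~1 uses — and, if the complex case is also wanted, state it separately as the Youla normal form $U^{T}\Delta U=\bigoplus_k\!\left(\begin{smallmatrix}0 & s_k\\ -s_k & 0\end{smallmatrix}\right)\oplus 0$ with $U$ unitary and $s_k\ge0$, whose proof is the standard induction that peels off one invariant $2\times2$ block of the skew form at a time. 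Modulo this bookkeeping, nothing here goes beyond the spectral theorem, so I anticipate no substantive obstacle.
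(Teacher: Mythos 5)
Your proof of the real case is the paper's own: $i\Delta$ is Hermitian, the spectral theorem gives $\Delta=U(-iD)U^\dagger$ with $D$ real diagonal, hence purely imaginary eigenvalues; the direct $v^\dagger\Delta v$ computation is a harmless extra check. The genuine divergence is in how you treat the first sentence, and there you are right where the paper is not. The paper deduces that every complex anti-symmetric $\Delta=\Delta_r+i\Delta_i$ is normal from the identity $\Delta\Delta^\dagger=\Delta^\dagger\Delta=-\Delta_r^2+\Delta_i^2$; the correct expansion is $\Delta\Delta^\dagger=-(\Delta_r^2+\Delta_i^2)+i[\Delta_r,\Delta_i]$ while $\Delta^\dagger\Delta=-(\Delta_r^2+\Delta_i^2)-i[\Delta_r,\Delta_i]$, so normality holds only when $\Delta_r$ and $\Delta_i$ commute (automatic for $N=2$, false generically for $N\ge 3$). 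Your observation that a generic complex anti-symmetric matrix admits no unitary diagonalization is therefore correct, and one can be fully concrete: $M=\left(\begin{smallmatrix}0 & 1 & i\\ -1 & 0 & 0\\ -i & 0 & 0\end{smallmatrix}\right)$ is anti-symmetric with characteristic polynomial $-\lambda^3$, hence nilpotent and nonzero, so it is not diagonalizable by any similarity at all, let alone a unitary one. Restricting the lemma to real $\Delta$ (which is all the physical application uses) and handling the complex case by the Youla congruence normal form is thus not bookkeeping but a repair of an actual error. Do note the downstream cost, though: Lemma~\ref{gauge transformation theorem} opens with ``$M=UDU^\dagger$ by Lemma~\ref{lemma}'' for an arbitrary anti-symmetric $M$, so under your corrected statement that lemma must likewise be restricted to normal (e.g.\ real) anti-symmetric matrices or be reproved via the Youla form; the later construction with the real matrices $\Delta_N(p,k)$ is unaffected.
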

\begin{proof}
Since $\Delta = \Delta_r + i\Delta_i$  is anti-symmetric, $\Delta_r, \Delta_i$ are real anti-symmetric, 
\begin{equation}
    \Delta\Delta^\dagger = \Delta^\dagger\Delta = -\Delta_r^2 + \Delta_i^2 
\end{equation}
Thus both $\Delta$ and $\Delta^\dagger$ are normal matrices, i.e., they can be unitarily diagonalized
\begin{equation} \label{central theorem}
    \Delta = U D U^\dagger, \   \
    \Delta^\dagger = U D^* U^\dagger
\end{equation}
where D is a diagonal matrix.

When $\Delta$ is anti-symmetric real matrix, $i\Delta$ is a Hermitian matrix, thus 
it can be diagonalized with all eigenvalues being real, i.e., $iD$ is a real diagonal matrix. This completes the proof.
\end{proof}

Next we show that following Lemma concerning the rank of an anti-symmetric matrix
\begin{lemma} \label{rank lemma}
If M is an anti-symmetric matrix of size $N$, denote its rank as $r_M$, then
$r_M$ is an even number and all non-zero eigenvalues of $N$ come in pairs of $\pm z_i$.
\end{lemma}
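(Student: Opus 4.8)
The plan is to separate the statement into its two independent assertions --- the $\pm$ pairing of the non-zero eigenvalues, and the parity of $r_M$ --- and to prove them with different tools, since, as I note below, neither follows from the other in general. I would establish the eigenvalue pairing first, purely from the symmetry of the characteristic polynomial, and then attack the rank.

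For the pairing: set $p_M(z)=\det(zI_N-M)$. Using $M^{\top}=-M$ together with $\det A=\det A^{\top}$, I would write $p_M(z)=\det\big((zI_N-M)^{\top}\big)=\det(zI_N+M)=(-1)^N\det(-zI_N-M)=(-1)^N p_M(-z)$. Hence the multiset of roots of $p_M$, i.e. the eigenvalues of $M$ counted with algebraic multiplicity, is invariant under $z\mapsto-z$: every non-zero eigenvalue $z_i$ is accompanied by $-z_i$ with the same multiplicity, and if $N$ is odd the identity forces $p_M(0)=0$, so $0$ is an eigenvalue. This gives the $\pm z_i$ claim.

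For the evenness of $r_M$ I would pass to the alternating bilinear form $B(x,y)=x^{\top}My$ on $\mathbb{C}^N$ (or $\mathbb{R}^N$). Anti-symmetry makes $B$ alternating, $B(x,x)=0$, and its radical is exactly $\ker M$, so $B$ descends to a non-degenerate alternating form $\bar B$ on $\bar V:=\mathbb{C}^N/\ker M$, with $\dim\bar V=r_M$. A symplectic-basis induction then finishes it: choose $e_1\neq0$ in $\bar V$; non-degeneracy of $\bar B$ supplies $f_1$ with $\bar B(e_1,f_1)=1$; the plane $\langle e_1,f_1\rangle$ is $\bar B$-non-degenerate, so $\bar V=\langle e_1,f_1\rangle\oplus\langle e_1,f_1\rangle^{\perp_{\bar B}}$ with $\bar B$ still alternating and non-degenerate on the complement; recursing shows $\dim\bar V$ is twice the number of such planes. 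Equivalently, one brings $M$ by a congruence $P^{\top}MP$ to $\bigoplus_{j=1}^{k}\left[\begin{smallmatrix}0&1\\-1&0\end{smallmatrix}\right]\oplus 0_{N-2k}$ and reads off $r_M=2k$. In the real anti-symmetric case one may instead shortcut through Lemma~\ref{lemma}: there $M$ is normal, hence unitarily diagonalizable, so $r_M$ equals the number of non-zero eigenvalues counted with multiplicity, which is even by the pairing.

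The one genuine obstacle I anticipate is the fully non-diagonalizable situation in the general complex case: an anti-symmetric complex matrix can be nilpotent with a nontrivial Jordan block (for instance a $3\times3$ anti-symmetric matrix with entries $a,b,c$ satisfying $a^2+b^2+c^2=0$), so $r_M$ cannot be recovered from eigenvalue multiplicities there and the congruence/symplectic argument is essential; conversely a congruence does not preserve eigenvalues, so it is useless for the pairing --- which is exactly why I would keep the two halves of the proof on separate footings. The remaining work is the routine bookkeeping inside the induction: checking that the radical of $B$ is $\ker M$, that $\langle e_1,f_1\rangle$ is $\bar B$-non-degenerate, and that restriction to its $\bar B$-orthogonal complement preserves non-degeneracy.
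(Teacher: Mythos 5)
Your proposal is correct, and on the pairing half it is essentially the paper's own argument carried out more carefully: the paper simply observes that the spectrum of $M$ equals that of $M^{\top}=-M$, so non-zero eigenvalues come in $\pm$ pairs, while your identity $p_M(z)=(-1)^N p_M(-z)$ additionally tracks algebraic multiplicities and forces $p_M(0)=0$ when $N$ is odd. The genuine difference is the rank claim: the paper's proof ends after the pairing and never argues that $r_M$ is even, implicitly leaning on eigenvalue counting and otherwise deferring to the cited literature (Youla); as your nilpotent $3\times 3$ example shows, that counting is not sufficient for a general complex anti-symmetric matrix, which need not be diagonalizable, so its rank is not determined by eigenvalue multiplicities. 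Your symplectic-basis/congruence argument (radical of the alternating form $x^{\top}My$ equals $\ker M$, non-degenerate alternating forms live on even-dimensional spaces, equivalently $P^{\top}MP$ is a direct sum of $2\times 2$ blocks and zeros) closes exactly that gap and proves the lemma in the stated generality. Your remark that in the real case one can shortcut through Lemma~\ref{lemma} --- normality gives rank equal to the number of non-zero eigenvalues, which is even by the pairing --- is in fact all that is needed for the matrices $\Delta$ the paper actually uses, so your route buys a complete and self-contained proof of both assertions at the cost of the extra bilinear-form machinery the paper avoids by citation.
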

\begin{proof}
This is because every eigenvalue of $M$ is also an eigenvalue of $M^T$ and $M^T=-M$, so if $\lambda$ is an eigenvalue of $M$, then $-\lambda$ is also an eigenvalue. Thus, all non-zero eigenvalues of $M$ come in pairs. This completes the proof.
\end{proof}
We note that both Lemmas have been established in the literature \cite{Youla1961}.

\begin{lemma} \label{gauge transformation theorem}
For any anti-symmetric matrix $M$, there exists an anti-symmetric real matrix $\tilde{M}$, such that $MM^\dagger$ and $\tilde{M}\tilde{M}^\dagger$ have the same eigenvalues and they are related by a unitary transformation $\mathcal{U}$ or an anti-unitary transformation $\mathcal{A}=\mathcal{K U}$, i.e.,
\begin{equation}
    M = \mathcal{U} \tilde{M} \mathcal{U}^\dagger = \mathcal{UK} \tilde{M}  \mathcal{K}\mathcal{U}^\dagger
\end{equation}
where $\mathcal{K}$ is the complex conjugate operator.
\end{lemma}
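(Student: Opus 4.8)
The plan is to bring $M$ to a canonical block form, read $\tilde M$ off it, and then track the change of basis that relates the two. By Lemma~\ref{lemma} $M$ is normal, so $M=UDU^{\dagger}$ with $U$ unitary and $D=\mathrm{diag}(\lambda_1,\dots,\lambda_N)$; by Lemma~\ref{rank lemma} the nonzero eigenvalues come in pairs $\{z_j,-z_j\}$ for $j=1,\dots,r_M/2$ together with $N-r_M$ zeros, and after reindexing (absorbing a permutation matrix into $U$) I may take $D=\big(\bigoplus_{j}\mathrm{diag}(z_j,-z_j)\big)\oplus 0_{N-r_M}$. Since $M$ is normal its singular values are the moduli $|\lambda_k|$, i.e.\ each $|z_j|$ with multiplicity two together with $0$ with multiplicity $N-r_M$; equivalently the eigenvalues of $MM^{\dagger}$ are $|z_j|^2$ (twice for each $j$) and $0$.

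I would then set $\tilde M$ equal to the real anti-symmetric matrix
\[
\tilde M=\Big(\bigoplus_{j=1}^{r_M/2}\begin{bmatrix}0 & |z_j|\\ -|z_j| & 0\end{bmatrix}\Big)\oplus 0_{N-r_M},
\]
for which $\tilde M\tilde M^{\dagger}=\big(\bigoplus_{j}|z_j|^2 I_2\big)\oplus 0_{N-r_M}$ by a one-line computation; comparing with the previous paragraph, $MM^{\dagger}$ and $\tilde M\tilde M^{\dagger}$ have exactly the same eigenvalues. This settles the first half of the statement, and it is essentially immediate once the canonical form is in hand. (Equivalently one can quote Youla's normal form \cite{Youla1961} directly, which is precisely the assertion that a complex anti-symmetric matrix is unitarily congruent to such a block matrix.)

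For the transformation I would argue block by block: in the $j$-th $2\times 2$ block one compares $\mathrm{diag}(z_j,-z_j)$ with $\begin{bmatrix}0 & |z_j|\\ -|z_j| & 0\end{bmatrix}$, which share their singular values, and one assembles the resulting $2\times 2$ unitaries — allowing a complex conjugation, i.e.\ an anti-unitary factor, wherever a phase $z_j/|z_j|$ must be absorbed — together with $U$ and the permutation into the global $\mathcal U$, respectively $\mathcal A$. Observe that the two displayed forms are the same object, since $\mathcal{UK}\,\tilde M\,\mathcal K\mathcal U^{\dagger}=\mathcal U\tilde M^{*}\mathcal U^{\dagger}=\mathcal U\tilde M\mathcal U^{\dagger}$ because $\tilde M$ is real, so it suffices to exhibit one of them.

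I expect the genuine obstacle to be exactly this last step. A unitary similarity preserves eigenvalues, yet the eigenvalues of $M$ are the $\pm z_j$, which need not be purely imaginary, while those of the real anti-symmetric $\tilde M$ are the $\pm i|z_j|$, which always are; hence no unitary conjugation can carry $\tilde M$ to $M$ unless every $z_j$ is purely imaginary. What the canonical form actually supplies is a unitary \emph{congruence} $M=V\tilde M V^{\mathrm T}$ rather than a similarity, and the delicate point is to pin down in which precise sense ``related by a unitary or anti-unitary transformation'' is to be read and to carry the phases $z_j/|z_j|$ through consistently. For the connection to the Riemann Hypothesis only the spectrum of $MM^{\dagger}$ — the squared singular values of $M$ — is needed, so the eigenvalue identity established above is the operative conclusion in any case.
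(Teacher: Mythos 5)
Your treatment of the spectral half of the lemma is essentially the paper's own: the paper writes $M=UDU^\dagger$ (Lemma~\ref{lemma}), pairs the eigenvalues (Lemma~\ref{rank lemma}), conjugates each block $\mathrm{diag}(i\varepsilon_i,-i\varepsilon_i)$ by a fixed $2\times2$ unitary $v_i$ into the real block $\begin{pmatrix}0&\varepsilon_i\\-\varepsilon_i&0\end{pmatrix}$, and ends with $MM^\dagger=\bigl(\mathcal{U}\mathcal{E}\mathcal{U}^\dagger\bigr)\bigl(\mathcal{U}\mathcal{E}^\dagger\mathcal{U}^\dagger\bigr)$ with $\mathcal{U}=U\Phi V$ — which is exactly your statement that $MM^\dagger$ and $\tilde M\tilde M^\dagger$ are unitarily similar, hence isospectral. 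So on that part you and the paper coincide.

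The obstacle you flag at the end is not a gap in your attempt; it is a genuine flaw in the lemma's displayed conclusion and in the paper's derivation of it. The paper's step \eqref{gauge tr}, $D=\Phi E\Phi^\dagger$, fails: $\Phi$ is block diagonal with scalar blocks $e^{i\varphi_i/2}I_2$, so it commutes with $E$ and $\Phi E\Phi^\dagger=E$, i.e.\ the phases $e^{i\varphi_i}$ are never reinstated. The identity that does hold is the \emph{congruence} $D=\Phi E\Phi^{T}$, which is Youla's normal form $M=W\tilde MW^{T}$, not a similarity. Hence $M=\mathcal{U}\tilde M\mathcal{U}^\dagger$ does not follow and, as you argue via eigenvalue preservation, cannot hold whenever $M$ has an eigenvalue off the imaginary axis (e.g.\ $M=\begin{pmatrix}0&i\\-i&0\end{pmatrix}$ has eigenvalues $\pm1$ while any real anti-symmetric $\tilde M$ has $\pm i|z|$); the anti-unitary variant collapses to the unitary one because $\tilde M$ is real, as you also note. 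What survives of the paper's proof is precisely what you proved — the final display for $MM^\dagger$ remains valid because $DD^\dagger=EE^\dagger$ commutes with $\Phi$ — and since the matrices $\Delta_N(p,k)$ used later are already real anti-symmetric, only this spectral statement is needed downstream. Your proposal is therefore correct as far as the lemma can be correct, and it correctly localizes where the stated similarity claim breaks.
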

\begin{proof}
$M=UDU^\dagger$ by Lemma \ref{lemma}, and due to Lemma \ref{rank lemma}, $D$ can be arranged as the following blocks of pairs
\begin{equation}
    D=\begin{bmatrix}
    \begin{pmatrix}\lambda_1 & 0\\0&-\lambda_1\end{pmatrix} & 0& ... & 0  & 0\\
    0 & \begin{pmatrix}\lambda_2 & 0\\0&-\lambda_2\end{pmatrix}& ... & 0 & 0\\
    ...\\
    0&0&...&\begin{pmatrix}\lambda_{r_M/2} & 0\\0&-\lambda_{r_M/2}\end{pmatrix}& 0\\
    0&0&0&...&0
    \end{bmatrix}
\end{equation}
Define the following phase factors $\varphi_i$
\begin{equation}\label{gauge field}
    \lambda_i = i \varepsilon_i e^{i \varphi_i}
\end{equation}
where $\varepsilon_i = |\lambda_i|$.
Thus we have
\begin{equation} \label{gauge tr}
    D=\Phi(\{ \varphi_i \})E \Phi^\dagger(\{ \varphi_i \})
\end{equation}
where
\begin{equation}
    \Phi(\{\varphi_i\}) = \begin{bmatrix}
    e^{i \frac{ \varphi_1}{2}}I_2&0& ... & 0 & 0\\
    0&e^{i \frac{ \varphi_2}{2}}I_2&...&0&0
    ...\\
    0&0&...&e^{i \frac{ \varphi_{r_M/2}}{2}}I_2& 0\\
    0&0&0&...&0
    \end{bmatrix}
\end{equation}
is a unitary matrix and $E$ is 
\begin{equation}
    E=\begin{bmatrix}
    \begin{pmatrix}i\varepsilon_1 & 0\\0&-i\varepsilon_1\end{pmatrix} & 0& ... & 0  & 0\\
    0 & \begin{pmatrix}i\varepsilon_2 & 0\\0&-i\varepsilon_2\end{pmatrix}& ... & 0 & 0\\
    ...\\
    0&0&...&\begin{pmatrix}i\varepsilon_{r_M/2} & 0\\0&-i\varepsilon_{r_M/2}\end{pmatrix}& 0\\
    0&0&0&...&0
    \end{bmatrix}
\end{equation}
And we have
\begin{equation*}
    \begin{pmatrix}i\varepsilon_i & 0\\0&-i\varepsilon_i\end{pmatrix} = v_i\begin{pmatrix}0&\varepsilon_i  \\-\varepsilon_i&0\end{pmatrix}v_i^\dagger
\end{equation*}
where $v_i v_i^\dagger = 1$, as
\begin{equation*}
    v_i = \begin{pmatrix} \frac{1}{\sqrt{2}} & \frac{-i}{\sqrt{2}} \\ \frac{1}{\sqrt{2}} & \frac{i}{\sqrt{2}}\end{pmatrix}\quad v^\dagger_i = \begin{pmatrix} \frac{1}{\sqrt{2}} & \frac{1}{\sqrt{2}} \\ \frac{i}{\sqrt{2}} & \frac{-i}{\sqrt{2}}\end{pmatrix}
\end{equation*}
Thus we have
\begin{equation}
    D=\Phi(\{\varphi_i\}) V\cdot \mathcal{E} \cdot V^\dagger \Phi^\dagger(\{\varphi_i\})
\end{equation}
where $\mathcal{E}$ is an anti-symmetric real matrix and 
$\Phi(\{\varphi_i\})V$ is a unitary matrix
since product of two unitary matrix is still a unitary matrix.
Thus
\begin{equation*}
    MM^\dagger= U DD^\dagger U^\dagger = U\Phi(\{\varphi_i\})V \mathcal{E}\mathcal{E}^\dagger V^\dagger \Phi^\dagger(\{\varphi_i\})U^\dagger = {\big (}\mathcal{U} \mathcal{E} \mathcal{U}^\dagger {\big )(}\mathcal{U} \mathcal{E}^\dagger \mathcal{U}^\dagger {\big )}
\end{equation*}
where $\mathcal{U}$
\begin{equation}
    \mathcal{U} = U\Phi(\{\varphi_i\})V
\end{equation}
is unitary as products of unitary matrices are unitary.
And for the case of anti-unitary transformation, we observe that both  $\mathcal{E} $ and $\mathcal{E}^\dagger$ are real, we have
\begin{equation*}
    \mathcal{K}\mathcal{E}\mathcal{K} = \mathcal{E} \quad \mathcal{K}\mathcal{E}^\dagger\mathcal{K} = \mathcal{E}^\dagger
\end{equation*}
\end{proof}

Thus, any non-degenerate anti-symmetric real matrix defines an equivalence relationship per Lemma \ref{gauge transformation theorem}, if $MM^\dagger$ have the same set of pairs of eigenvalues. All members of the equivalence class are related by a unitary or an anti-unitary transformation.

\theoremstyle{definition}
\begin{definition}\label{chracteristic}
The real anti-symmetry matrix of an equivalent class of anti-symmetric matrices is called the characteristic of the class. We use $\mathcal{C}(M)$ to denote the characteristic of an anti-symmetric matrix.
\end{definition}

Intuitively, since the anti-symmetric matrices arise from the off-diagonal paring block of a many-body fermionic system, the topological effect of the anti-symmetric matrix is intricately related to the quantum entanglement of a many-body fermionic system and any periodicity in the off-diagonal matrix elements in real space will imply some harmonic resonances in the eigenvalues of the original Hermitian Hamiltonian, but in a way through the imaginary eigenvalues of the characteristics of the off-diagonal anti-symmetric matrix.

\section{Connection to Riemann Hypothesis}
Next we solve for the eigenvalues of the following anti-symmetric real matrix for a given $p$, where $p$ is a prime number that corresponds to the period of the gauge field. 

We will solve the eigenvalue problem with each $k \in[0,2/p]$. 
Let the anti-symmetric matrix $\Delta_N(p,k)$ be given explicitly as 
\begin{equation} \label{Delta}
    \Delta_{N}(p,k)=\frac{1}{2}\begin{bmatrix}
    0 & t_1(p,k) & t_2(p,k) & ... & t_{(N-1)}(p,k) \\
    -t_1(p,k) & 0 & t_1(p,k) & ... & t_{(N-2)}(p,k) \\
    ... \\
    -t_{(N-1)}(p,k) & -t_{(N-2)}(p,k) & -t_{(N-3)}(p,k) & ... &0
    \end{bmatrix}
\end{equation}

and $t_l(p,k), k\in[0,2/p]$ is 
\begin{equation}
    t_l(p,k) =p\cdot l \int_{-1/2}^{1/2} d q\cdot q\cdot  sin(2\pi (q+k)\cdot  l ) =  \frac{(-1)^l}{\pi }\cdot cos(2\pi l \cdot k),\quad l\in \{1,2,...,N-1\}
\end{equation}

And the matrix element for $\Delta_N(p,k)$ is anti-symmetric real:
\begin{equation}
    \Delta_{l,l'}(p,k) = - \Delta_{l',l}(p,k)
\end{equation}

Once the eigenvalues $\lambda_n(p,k)=i\varepsilon_n(p,k)$ are solved for all $k\in[0,2/p]$, then the following spectral function can be calculated
\begin{equation}
    \mathcal{G}_N (p,z)= \frac{p}{2N}
    \sum_n\int_0^{2/p} dk \frac{1}{z-\lambda_n(p,k)}
     =  \int_{-\infty}^{\infty} d\varepsilon \cdot \rho_N(p,\varepsilon)
     \frac{1}{z-i\varepsilon} 
\end{equation}
where the normalized density of state $\rho_N(p,\varepsilon)$ is given by
\begin{equation} \label{density of states}
    \rho_N(p,\varepsilon) = \frac{2}{Np} \sum_n \int_0^{p/2} dk \delta(\varepsilon - \varepsilon_n(p,k)) 
\end{equation}
and it has the following sum rule
\begin{equation}
    \int_{-\infty}^{\infty} d\varepsilon \cdot \rho_N(p, \varepsilon) = 1
\end{equation}

Note that $\mathcal{G}_N(p,z)$ contains the periodicity of $p$ of the underlying gauge field, thus poles of this function are harmonic resonances, that is when $N\rightarrow\infty$, we have
the density of states $\rho(\varepsilon)$ defined in Eq.\eqref{density of states} diverges at those resonance frequencies.

The poles of $\mathcal{G}_N(p,z)$ are all expected to be imaginary since all $\lambda_n(p, k)$ are imaginary. Thus poles of the following function
\begin{equation} \label{propagator}
    \mathcal{G}_N (z) = \prod_{p<N} \mathcal{G}_N(p,z)
\end{equation}
where $p$ are prime numbers, are all imaginary.

Define the following function
\begin{equation}
    \mathcal{G}(z) =\lim_{N\rightarrow \infty}  \mathcal{G}_N(z)
\end{equation}

We conjecture that poles of $ \mathcal{G}(z)$ are the imaginary part of the non-trivial zeros of the Riemann zeta function, to within a scaling factor.

Note that in the $N\rightarrow\infty$ limit, the non-Hermitian anti-symmetric matrix $\Delta_N(p,k)$ defined above approaches the operator $\hat{x}\hat{p}$, with additional topological phase factor picked up by the momentum operator. The choice of the particular form of the hopping matrix element is inspired by Berry-Keating \cite{BerryKeating} conjecture, especially the work of Bender, Brody, and Müller \cite{BBM}.

{\bf Acknowledgement} This work is supported by Sophyics Technology, LLC.

\bibliography{./refs.bib}

\end{document}